\documentclass[11pt,reqno]{amsart}

\usepackage[utf8]{inputenc}
\usepackage[T1]{fontenc}
\usepackage[english]{babel}
\usepackage{amsmath,amssymb,amsfonts,amsthm,mathtools,bm,bbm,dsfont,upgreek}
\usepackage[dvipsnames,svgnames,x11names,table]{xcolor}

\usepackage[margin=1.2in]{geometry}
\usepackage{verbatim}
\usepackage{comment}
\usepackage{xurl}
\usepackage{float}

\usepackage[numbers,square,sort]{natbib}
\usepackage{titletoc}
\usepackage{hyperref}
\hypersetup{
    colorlinks=true,
    linkcolor=blue,
    citecolor=Green,
    urlcolor=black,
    filecolor=cyan,
}
\usepackage{aliascnt}
\usepackage[capitalize,noabbrev]{cleveref}
\usepackage{doi}

\title{On Samuels' Conjecture}

\author{Zhi Ling}
\address{National University of Singapore}
\email{lingzhi@nus.edu.sg}

\date{19 Aug 2026}

\subjclass[2020]{Primary 60E15; Secondary 60G50}

\theoremstyle{plain}

\newtheorem{theorem}{Theorem}[section]

\newaliascnt{lemma}{theorem}
\newtheorem{lemma}[lemma]{Lemma}
\aliascntresetthe{lemma}

\newaliascnt{proposition}{theorem}
\newtheorem{proposition}[proposition]{Proposition}
\aliascntresetthe{proposition}

\newaliascnt{corollary}{theorem}
\newtheorem{corollary}[corollary]{Corollary}
\aliascntresetthe{corollary}

\newaliascnt{claim}{theorem}
\newtheorem{claim}[claim]{Claim}
\aliascntresetthe{claim}

\theoremstyle{definition}
\newaliascnt{definition}{theorem}
\newtheorem{definition}[definition]{Definition}
\aliascntresetthe{definition}

\theoremstyle{remark}
\newaliascnt{remark}{theorem}
\newtheorem{remark}[remark]{Remark}
\aliascntresetthe{remark}
\newaliascnt{example}{theorem}

\aliascntresetthe{example}

\newcommand{\Var}{\operatorname{Var}}
\newcommand{\Cov}{\operatorname{Cov}}
\newcommand{\dd}{\,\mathrm d}

\begin{document}

\begin{abstract}
Let $0\leq\mu_1\leq\cdots\leq\mu_n$ and let $\lambda>\sum_{i=1}^n\mu_i$. Let $X_1,...,X_n$ be independent nonnegative random variables satisfying
$\mathbb{E}X_i=\mu_i$, and write $D_i := \lambda-\sum_{k=1}^{i-1}\mu_k$ for $1\leq i\leq n$. We prove that
$$
\inf_{X_1,...,X_n} \mathbb{P}\left( \sum_{i=1}^nX_i<\lambda \right)
= \min_{1\leq i\leq n} \prod_{j=i}^n \left( 1-\frac{\mu_j}{D_i} \right).
$$
The bound is sharp and is attained. This proves Samuels' conjecture. Feige's conjecture is thereby resolved, since it follows immediately from the equal-means case. The proof is self-contained.

\end{abstract}

\maketitle

\section{Introduction}
\label{sec.introduction}

Let $n\geq1$, let $0\leq\mu_1\leq\cdots\leq\mu_n$, and let $\lambda>\sum_{i=1}^n\mu_i$. For $\mu\geq0$, denote the set of Borel probability measures on $[0, \infty)$ with mean $\mu$ by $\mathcal{M}(\mu)$. Define
\begin{align}
\label{eq.optimization_domain}
c(\bm{\mu},\lambda)
:=
\inf \mathbb P\left( \sum_{i=1}^n X_i<\lambda \right),
\end{align}
where $\bm{\mu}=(\mu_1,\ldots,\mu_n)$, and the infimum is taken over all independent nonnegative random variables $X_1,...,X_n$ satisfying $\text{Law}(X_i)\in\mathcal{M}(\mu_i)$.

Set
\begin{equation}
\delta := \lambda-\sum_{i=1}^n\mu_i>0,
\quad
D_i := \lambda-\sum_{k=1}^{i-1}\mu_k
= \delta+\sum_{k=i}^n\mu_k,
\quad
q_i(\bm{\mu},\lambda) := \prod_{j=i}^n \left( 1-\frac{\mu_j}{D_i} \right).
\label{eq.threshold_remainders}
\end{equation}
Since $D_i>\mu_j$, for every $j\geq i$, all factors in $q_i(\bm{\mu},\lambda)$ belong to $(0,1]$.

Our main result is \cref{thm.main}.

\begin{theorem}[Samuels' conjecture]
\label{thm.main}
For every integer $n\geq1$, ordered mean vector $0\leq\mu_1\leq\cdots\leq\mu_n$, and $\lambda>\sum_{i=1}^n\mu_i$,
\begin{equation}
c(\bm{\mu},\lambda)
=
\min_{1\leq i\leq n} q_i(\bm{\mu},\lambda)
=
\min_{1\leq i\leq n} \prod_{j=i}^n \left( 1- \frac{\mu_j}{ \lambda-\sum_{k=1}^{i-1}\mu_k } \right).
\label{eq.samuels_formula}
\end{equation}
The bound is sharp and is attained.
\end{theorem}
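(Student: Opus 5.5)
The upper bound comes from an explicit construction; the lower bound is proved by induction on $n$, and that lower bound is where essentially all the difficulty lies.

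\textbf{Upper bound.} Fix $i$. Take $X_1,\dots,X_{i-1}$ deterministic, $X_k\equiv\mu_k$; the remaining threshold is then $D_i$. For $j\ge i$ take the two-point law
\[
\mathbb P(X_j=D_i)=\frac{\mu_j}{D_i},\qquad \mathbb P(X_j=0)=1-\frac{\mu_j}{D_i},
\]
which has mean $\mu_j$. Then $\sum_k X_k<\lambda$ holds iff every $X_j$ with $j\ge i$ vanishes. This event has probability exactly $q_i(\bm\mu,\lambda)$. Taking the best $i$ gives $c\le\min_i q_i$ with equality attained, so once the lower bound is proved, the claims ``sharp'' and ``attained'' follow.

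\textbf{Lower bound, reductions.} First we reduce to extremal laws. Fix all variables but $X_n$, and set $G(t)=\mathbb P(\sum_{k<n}X_k<\lambda-t)$, which is nonincreasing in $t$. Then $\mathbb P(S<\lambda)=\mathbb E\,G(X_n)$, and this is linear in $\mathrm{Law}(X_n)$. So the infimum over $\mathcal M(\mu_n)$ can be restricted to laws supported on at most two points. By the usual convex-minorant argument, one of the two points may be taken to be $0$, or both lie in $[0,\lambda)$ at a mass point that is essentially $\mu_n$. The other point sits at some level $a\ge\mu_n$, where it kills the event. This yields the recursion
\[
c(\bm\mu,\lambda)=\min\Bigl\{\inf_{a\ge \mu_n}\Bigl(1-\frac{\mu_n}{a}\Bigr)\,c\bigl(\bm\mu',\lambda\bigr)+\dots\Bigr\},
\]
where $\bm\mu'$ consists of the first $n-1$ means. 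More simply, I would prove by induction on $n$ the stronger inequality
\[
\mathbb E\,G(X_n)\ \ge\ \min_i q_i,
\]
splitting according to which variable is ``largest''. By symmetry, the ordering lets us process the variable with the largest mean, $\mu_n$, first.

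\textbf{Main step.} Condition on $X_n=t$. If $t\ge\lambda$ the contribution is $0$. Otherwise the inductive hypothesis applies to $n-1$ variables with threshold $\lambda-t$, provided $\lambda-t>\sum_{k<n}\mu_k$. If instead $\lambda-t\le\sum_{k<n}\mu_k$, we only use $G\ge0$. This gives
\[
\mathbb P(S<\lambda)\ \ge\ \mathbb E\,\phi(X_n),\qquad \phi(t):=\mathbf 1\{t<\delta+\mu_n\}\,\min_{1\le i\le n-1} q_i\bigl(\bm\mu',\lambda-t\bigr).
\]
The task is then to show that $\inf_{\mathcal M(\mu_n)}\mathbb E\,\phi(X_n)\ge \min_i q_i(\bm\mu,\lambda)$. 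This infimum equals the value at $\mu_n$ of the lower convex envelope of $\phi$ on $[0,\infty)$. The envelope is attained by a chord from some $t_0\in[0,\mu_n]$ to a point $t_1\ge\mu_n$. Along this chord one uses that each $t\mapsto q_i(\bm\mu',\lambda-t)$ is log-concave and decreasing, and that $\phi$ drops to $0$ at $t=\delta+\mu_n$.

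I expect this envelope computation to be the main obstacle. The minimum over $i$ of the $q_i$ is not convex, and the chord may connect interior points. The key is to show that the optimal chord always runs from $t_0=0$ to $t_1=D_i-D_{i}'$ for the appropriate $i$, or from $t_0$ near $0$ to $\delta+\mu_n$. These two choices reproduce $q_i(\bm\mu,\lambda)$ for $i\le n-1$ and $q_n$ respectively, and the ordering $\mu_n\ge\mu_k$ is what rules out the other chords. My first attempt would be to check the tangency conditions explicitly for a single $q_i$ composed with the chord. I would then use the monotonicity of $\mu$ to show that the chord pivoting at $0$ dominates, perhaps via an inequality of the form $(1-x/D)\ge e^{-x/D - \cdots}$ combined with AM–GM on the factors.
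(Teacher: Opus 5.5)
Your upper-bound construction is the same as the paper's sharpness argument and is correct. The lower bound has a genuine gap: the inequality your main step reduces to, $\inf_{\mathcal M(\mu_n)}\mathbb E\,\phi(X_n)\ge\min_i q_i(\bm{\mu},\lambda)$, is already false for $n=2$, so no convex-envelope analysis can close it.

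Take $\mu_1=\mu_2=1$ and $\lambda=3$, so $\delta=1$. Then $\min_i q_i=\min\{(2/3)^2,1/2\}=4/9$, while $\phi(t)=\mathbf{1}\{t<2\}\,(1-\frac{1}{3-t})$. Choosing $X_2\in\{0,2\}$ with probability $1/2$ each gives $\mathbb E\,\phi(X_2)=\frac12\cdot\frac23=\frac13<\frac49$. For general $\delta$, taking $X_2\in\{0,1+\delta\}$ gives $\delta/(2+\delta)$, which is strictly below both $q_1=((1+\delta)/(2+\delta))^2$ and $q_2=\delta/(1+\delta)$.

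The loss is structural. Lower-bounding $G(t)$ by the $(n-1)$-variable infimum at threshold $\lambda-t$ lets the adversary choose the law of $X_1,\dots,X_{n-1}$ \emph{after} seeing $X_n=t$. In the example, $X_1$ is a two-point law on $\{0,3\}$ when $X_2=0$, but $X_1\equiv1$ when $X_2=2$. This destroys independence, and the resulting adaptive value is strictly smaller than $c(\bm{\mu},\lambda)$.

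The same defect shows in your expectation that the chord from $t_0\approx0$ to $\delta+\mu_n$ reproduces $q_n$. It actually yields $\phi(0)\,q_n=\min_{i\le n-1}q_i(\bm{\mu}',\lambda)\,q_n$, which is strictly less than $q_n$ as soon as some $\mu_k>0$ with $k<n$. The reason is that in the relaxation the first $n-1$ variables need not be deterministic when $X_n=0$.

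What is missing is an inductive statement that keeps information about the joint law, not just the bare $(n-1)$-variable value. The paper proceeds as follows.
\begin{itemize}
\item It applies the two-point mixture decomposition to all variables simultaneously. This preserves independence and turns the problem into one about weighted Bernoulli sums $T=\sum_i w_iB_i$ with $w_ip_i\le a_i$.
\item It inducts on a stronger, state-dependent bound: $\mathbb P(T<1)\ge\prod_i(1-a_i/R)^{\ell_i/a_i}$, where $R=1-\mathbb E[T\mid T<1]$, $0\le\ell_i\le a_i$, and $\sum_i\ell_i=L=\mathbb ET-\mathbb E[T\mid T<1]$.
\item The coordinate to delete is one with $\operatorname{Cov}(T,B_i\mid T<1)\ge0$. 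This guarantees $R_{-i}\ge R$, so the hypothesis for $T_{-i}$ transfers to the parameter $R$, because the potential is decreasing in $R$.
\item The exact identity $\mathbb P(T<1)=\mathbb P(T_{-i}<1)(1-\rho_i)$, with $\rho_iR\le L-L_{-i}\le a_i$, controls the one-step loss.
\item Finally, optimizing the $\ell_i$ against the ordering $a_1\le\dots\le a_n$, and showing that the resulting interpolant in $L$ has no interior minimum, produces $\min_i q_i$.
\end{itemize}
Your one-variable two-point reduction is compatible with this. But it has to feed into a coupling-preserving induction of this kind, not a pointwise application of the $(n-1)$-variable result.
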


\begin{remark}
The equivalent upper-tail formulation is
\begin{equation}
\sup \mathbb{P}\left( \sum_{i=1}^nX_i\geq\lambda \right)
=
1- \min_{1\leq i\leq n} \prod_{j=i}^n \left( 1- \frac{\mu_j}{ \lambda-\sum_{k=1}^{i-1}\mu_k } \right).
\end{equation}
\end{remark}

The equal-means result immediately yields the sharp form of Feige's
small-deviation conjectures.

\begin{corollary}[Feige's conjectures]
\label{cor.feige}
Let $X_1,\ldots,X_n$ be independent nonnegative random variables satisfying
$\mathbb{E}X_i\leq1$. Write $S = \sum_{i=1}^nX_i$. For arbitrary $\delta>0$,
\begin{equation}
\inf_{X_1,\ldots,X_n}
\mathbb P\left(S<\mathbb ES+\delta\right)
 =: c_{n,\delta}
 = \min\left\{ \frac{\delta}{1+\delta}, \left( 1-\frac1{n+\delta} \right)^n \right\}.
\label{eq.feige_fixed_dimension}
\end{equation}
The bound is sharp and is attained for every fixed pair $(n,\delta)$.
Moreover,
\begin{equation}
\inf_{n\ge1} c_{n,\delta}
=
\min\left\{ \frac{\delta}{1+\delta}, e^{-1} \right\}.
\label{eq.feige_universal}
\end{equation}
\end{corollary}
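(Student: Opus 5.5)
The corollary follows from \cref{thm.main} by a translation argument and a one-variable calculus argument. The plan has three steps: reduce to equal means $\mu_i=1$, evaluate the formula of \cref{thm.main} in that case, and then show that the resulting minimum over $i$ is always attained at an endpoint.

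\emph{Reduction to $\mathbb EX_i=1$.} Let $X_1,\dots,X_n$ be independent and nonnegative with $\mu_i:=\mathbb EX_i\le 1$. Put $X_i':=X_i+(1-\mu_i)$. These are still independent and nonnegative, they have mean exactly $1$, and $S'-\mathbb ES'=S-\mathbb ES$. Hence $\mathbb P(S<\mathbb ES+\delta)=\mathbb P(S'<\mathbb ES'+\delta)$. Conversely, the mean-$1$ laws are themselves admissible. So $c_{n,\delta}=c(\bm 1,n+\delta)$, where $\bm 1=(1,\dots,1)$.

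\emph{Evaluating the formula.} With $\mu_j=1$ and $\lambda=n+\delta$ we get $D_i=n-i+1+\delta$. Writing $m=n-i+1\in\{1,\dots,n\}$, this gives
\[
q_i=f(m):=\Bigl(1-\frac1{m+\delta}\Bigr)^m .
\]
Therefore \cref{thm.main} yields $c_{n,\delta}=\min_{1\le m\le n}f(m)$, and this value is attained. Note that $f(1)=\delta/(1+\delta)$ and $f(n)$ is the second term in \eqref{eq.feige_fixed_dimension}. It remains to show that $\min_{1\le m\le n}f(m)=\min\{f(1),f(n)\}$. It suffices to prove that $g(x):=\log f(x)=x\log\bigl(1-\tfrac1{x+\delta}\bigr)$ is unimodal on $[1,\infty)$, meaning first nondecreasing and then nonincreasing. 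A unimodal function on an interval attains its minimum over any finite set at one of the two extreme points.

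\emph{Unimodality, the main step.} Substitute $t=x+\delta>1$. A direct computation gives
\[
g'=\log\Bigl(1-\frac1t\Bigr)+\frac{t-\delta}{t(t-1)},
\qquad
g''=\frac{(2\delta-1)t-\delta}{t^2(t-1)^2}.
\]
Expanding as $t\to\infty$ gives $g'(t)\to0$. The numerator of $g''$ is affine in $t$, so it changes sign at most once.

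If $\delta\le 1/2$, then $g''<0$ throughout, so $g$ is concave, and a concave function is unimodal.

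If $\delta>1/2$, then $g$ is concave for $t<\delta/(2\delta-1)$ and convex afterwards. On the convex part $g'$ is increasing and tends to $0$, so $g'<0$ there. On the concave part $g'$ is decreasing. Together these say that $g'$ is positive and then negative, with at most one sign change, so $g$ is again unimodal.

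This sign analysis is the only nontrivial point, and I expect the computation of $g''$ to be the step that needs the most care.

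\emph{The universal bound \eqref{eq.feige_universal}.} By the unimodality just shown, $f(n)$ is eventually nonincreasing in $n$. Moreover $f(n)\to e^{-1}$, because $g(x)=-x/(x+\delta)+O(1/x)\to-1$. Again by unimodality, every $f(m)$ is at least $\min\{f(1),e^{-1}\}$. Hence
\[
\inf_n c_{n,\delta}=\inf_n\min\{f(1),f(n)\}=\min\{\delta/(1+\delta),e^{-1}\},
\]
which is \eqref{eq.feige_universal}.
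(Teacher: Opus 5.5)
Your proposal is correct and follows the route the paper indicates, namely specializing \cref{thm.main} to equal means; the paper only asserts that the corollary ``follows immediately'' and writes out no details. Your shift $X_i\mapsto X_i+(1-\mathbb{E}X_i)$ and your unimodality analysis of $g(x)=x\log\bigl(1-\frac{1}{x+\delta}\bigr)$ (via $g''=\frac{(2\delta-1)t-\delta}{t^2(t-1)^2}$ and $g'\to 0$) correctly supply the two steps the paper leaves implicit: the reduction to $c(\bm 1,n+\delta)$, and the collapse of $\min_{1\le m\le n}f(m)$ to $\min\{f(1),f(n)\}$, which then also yields the universal bound.
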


\subsection{Background and related work}
\label{subsec.related_work}

This class of problems, in which tail probabilities of random sums are controlled using only independence, nonnegativity, and first-moment information, represents an important paradigm in probability inequalities. The general conjecture was formulated in Samuels' work in the 1960s on Chebyshev- and Markov-type inequalities \cite{Samuels1966,Samuels1968,Samuels1969}. Samuels himself settled the conjecture for $n \leq 4$; for general $n$, he proved the conjecture in the far-tail regime $\lambda\geq (n-1)\sum_i\mu_i$. Beyond these cases, the conjecture remained open in general for several decades.

Feige reintroduced the equal-means version of this conjecture while studying the estimation of the average degree of graphs \cite{Feige2006}. Feige proved $\mathbb{P}\left( S < \mathbb{E}S + \delta \right) \ge \min\left\{ \delta/(1+\delta), 1/13\right\}$ and conjectured that the sharp dimension-free lower bound is $\min\left\{ \delta/(1+\delta), 1/e \right\}$ for every $\delta>0$. The problem now commonly known as \textit{Feige's conjecture} corresponds to the $\delta=1$ case: $\mathbb{P}\!\left( S < \mathbb{E}S + 1 \right) \ge e^{-1}$.
The best proved universal constant was subsequently improved to $1/8$ \citep{HeZhangZhang2010}, $7/50$ \citep{Garnett2020}, and $0.1798$ \citep{GuoHeLingLiu2020}. The $e^{-1}$ bound was also established under additional assumptions, including discrete log-concavity \citep{Alqasem2024} and identically distributed random variables \citep{Egozcue2025}. Paulin showed that Samuels' conjecture implies the dimension-free version of Feige's conjecture \citep{Paulin2017}. Strack and Westermann used dynamic programming to improve the far-tail threshold in the equal-means case of Samuels' conjecture from quadratic order in $n$ to linear \citep{Strack2026}.

Recent advances on Feige's conjecture build on the exact distribution-free mean test of Vlassis and Thomas \citep{Vlassis2026}. Nie and Wei used it to give a short proof for $\delta=1$, obtaining the finite-dimensional constant $(n/(n+1))^n$ \citep{Nie2026}. Fu, Han, Wang, Yan, Zhang, and Zhou combined the same calibration theorem with Grünbaum-type inequalities to establish the sharp finite-dimensional bound for $\delta\ge1$ \citep{Fu2026}. Before the present work, the full sharp conjecture for arbitrary $\delta>0$ remained open.

\section{Proof of the main theorem}
\label{sec.main_proof}

\subsection{Preliminaries}
In this subsection we state several auxiliary results.

The following inequality on weighted Bernoulli sums is the core of the argument. Its proof is given in \cref{sec.bernoulli_proof}.

\begin{theorem}[Lower-tail bound for weighted Bernoulli sums]
\label{thm.bernoulli}
Let $T := \sum_{i=1}^nw_iB_i$, where $B_i\sim\operatorname{Bernoulli}(p_i)$ independently, $0\leq p_i \leq1$ and $0\leq w_i\leq1$. Let $0<a_1\leq\cdots\leq a_n$ satisfy $w_ip_i\leq a_i$. Assume
\begin{equation}
d := 1-\mathbb{E}T = 1-\sum_{i=1}^nw_ip_i >0.
\end{equation}
For $1\leq i\leq n$, write $C_i := \sum_{j=i}^na_j$.
Then
\begin{equation}
\mathbb{P}(T<1)
\geq
\min_{1\leq i\leq n} \prod_{j=i}^n \left( 1-\frac{a_j}{d+C_i} \right).
\label{eq.bernoulli_bound}
\end{equation}
\end{theorem}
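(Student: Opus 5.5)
We argue by induction on $n$, using the smallest $a_1$ and conditioning on $B_1$. Write $T' := \sum_{i=2}^n w_iB_i$ and $F(\bm a, d) := \min_{i}\prod_{j\ge i}\bigl(1-a_j/(d+C_i)\bigr)$. The base case $n=1$ is immediate. Since $w_1p_1\le a_1$ and $w_1\le 1$, we have $\mathbb P(T<1)=1-p_1\ge 1-a_1/(d+a_1)$ when $w_1=1$; when $w_1<1$ the probability is $1$.

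For the inductive step, we condition on $B_1$:
\begin{equation*}
\mathbb P(T<1) = (1-p_1)\,\mathbb P(T'<1) + p_1\,\mathbb P(T'<1-w_1).
\end{equation*}
Rescaling the second event by $1-w_1$ turns it into an instance of the same theorem with $n-1$ variables, weights $w_i/(1-w_1)$ (truncated at $1$; variables with weight exceeding $1$ simply make the event harder, so they are handled by treating them as forcing $B_i=0$), and slack $d'' = (1-w_1-\mathbb ET')/(1-w_1)$. The first event is an instance with slack $d' = 1-\mathbb ET' = d+w_1p_1$. The induction hypothesis then bounds both terms by $F$ evaluated on $(a_2,\dots,a_n)$ with the appropriate slacks. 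After rescaling, the $a_j$ become $a_j/(1-w_1)$. This is why $a_1\le a_j$ matters: the ordering is preserved, and the product structure is scale-invariant once $d$ is scaled too. In both terms, $F$ on the rescaled data equals $F(\bm a_{\ge2}, d_\ast)$ with $d_\ast = 1-\sum_{j\ge 2} w_jp_j$ in the first term and $d_\ast = 1-w_1-\sum_{j\ge2}w_jp_j$ in the second (before rescaling).

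It then remains to prove a purely deterministic inequality. Let $g(x) := F(\bm a_{\ge2}, x)$, which is nondecreasing in $x$. Let $s:=w_1p_1\le a_1$ and $e := d+s$. We need
\begin{equation*}
(1-p_1)\,g(e) + p_1\,g(e-w_1) \ \ge\ F(\bm a, d)
\end{equation*}
for all admissible $p_1,w_1$ with $p_1w_1=s$, and the boundary term $g(\cdot)$ is interpreted as $0$ when its slack is nonpositive. The plan is to first minimize over $(p_1,w_1)$ with $s$ fixed. We would show that the worst case is $w_1=1$, $p_1=s$, that is, $(1-s)g(d+s)+s\,g(d)$, or else $p_1=1$, which forces $w_1 = s$ and gives $g(d)$. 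Next we minimize over $s\in[0,a_1]$. The key claim is that the minimum is either $g(d)$ (corresponding to indices $i\ge2$ in $F(\bm a,d)$) or is dominated by the $i=1$ term $\prod_{j\ge1}(1-a_j/(d+C_1))$. To verify the latter, we would use the explicit form $q_i(x)=\prod_{j\ge i}(1-a_j/(x+C_i))$. The function $x\mapsto q_i(x)$ is concave-like, in the sense that $\log q_i$ has a convenient derivative. For the extremal $s=a_1$, the identity $(1-a_1/(d+C_1))\,q_2(d+a_1)=q_1(d)$ holds exactly, because $d+a_1+C_2 = d+C_1$. So we want to show $(1-s)g(d+s)+s\,g(d)\ge\min\bigl(g(d),(1-a_1/(d+C_1))g(d+a_1)\bigr)$, together with a monotonicity or convexity argument in $s$ and in the weight $1-s$ versus $1-a_1/(d+C_1)$.

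The main obstacle is this one-variable comparison. The minimum over $i$ inside $g$ makes $g$ only piecewise smooth. Also, the weight $1-s$ does not match the product factor $1-a_1/(d+C_1)$, since $d+C_1\ge 1$ need not hold. We would first try to replace $g$ by the single branch $q_i$ achieving $g(d+s)$ and prove, for each fixed $i$, that $s\mapsto (1-s)q_i(d+s)+s\,q_i(d)$ lies above the stated minimum. This would use log-concavity of $q_i$ in $x$ together with the normalization $d+\sum_j w_jp_j=1$, which gives $d+C_1\ge1$ whenever $a_j\ge w_jp_j$. That inequality lets us compare $1-s$ with $1-a_1/(d+C_1)$ scaled appropriately. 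If the pointwise-branch argument fails, the fallback is to strengthen the induction hypothesis so that it includes the concavity of $x\mapsto F(\bm a,x)$ on the relevant range.
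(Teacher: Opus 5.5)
There is a genuine gap: the one-variable inequality you reduce everything to is false, so the induction as set up cannot close. There is also a slip along the way. For $w_1=1$, $p_1=s$ the second term is $s\,g(d+s-1)$, and $d+s-1=-\sum_{j\ge2}w_jp_j\le0$, so it vanishes and the expression is $(1-s)g(d+s)$. The value $g(d)$ belongs to the other endpoint, $p_1=1$, $w_1=s$.

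With the correct expression the inequality fails. Take $n=2$, $a_1=a_2=3/10$, $w_1=w_2=1$, $p_1=3/10$, $p_2=0$. Then $d=7/10$ and $d+C_1=13/10$, so $F(\bm a,d)=\min\{(10/13)^2,\,7/10\}=100/169$. Your lower bound is $(1-p_1)\,g(1)=\frac{7}{10}\cdot\frac{10}{13}=91/169<100/169$. The theorem itself holds here, since $\mathbb P(T<1)=7/10$; it is the scheme that fails.

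The reason is structural and already visible in your own comparison. At $w_1=1$, $p_1=a_1$, the $i=1$ term of $F(\bm a,d)$ is $\bigl(1-a_1/(d+C_1)\bigr)q_2(d+a_1)$. Because $d+C_1=1+\sum_j(a_j-w_jp_j)\ge1$, its first factor is at least $1-a_1=1-p_1$, strictly so whenever some $a_j>w_jp_j$, while $g(d+a_1)\le q_2(d+a_1)$. So $d+C_1\ge1$ works against you, not for you. The loss happens when you replace $\mathbb P(T'<1)$ by $g(d+s)$. If $a_2\gg w_2p_2$, the $(n-1)$-variable bound is far from the truth ($10/13$ versus $1$ above), and an induction hypothesis that remembers only the unconditional slack $d$ cannot recover this. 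Strengthening the hypothesis with concavity of $x\mapsto F(\bm a,x)$ cannot repair a numerical failure at a single admissible point. Nor does conditioning on the coordinate with smallest $a$ have any special justification, since deleting any coordinate preserves the ordering.

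The paper avoids this by inducting on a stronger statement about the conditional law on $\{T<1\}$. With $R=1-\mathbb E[T\mid T<1]$ and $L=\mathbb ET-\mathbb E[T\mid T<1]$, it proves $\mathbb P(T<1)\ge\prod_i(1-a_i/R)^{\ell_i/a_i}$ for some $0\le\ell_i\le a_i$ with $\sum_i\ell_i=L$. The deleted coordinate is chosen adaptively: one with $w_i>0$ and $\operatorname{Cov}(T,B_i\mid T<1)\ge0$, which exists because the conditional variance is nonnegative. This choice gives $R_{-i}\ge R$, so the hypothesis for $T_{-i}$ transfers to room $R$. The exact recursion $\mathbb P(T<1)=\mathbb P(T_{-i}<1)(1-\rho_i)$, with $\rho_i\le\Delta_i/R\le a_i/R$, then controls the one-step loss. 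Only at the end are $L$ and $R=d+L$ converted into the minimum over $i$, via $\mathbb P(T<1)\ge d/R$, a greedy allocation of the $\ell_i$, and an endpoint argument. Your scheme is missing both this extra conditional state and the adaptive choice of which coordinate to remove.
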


The following results show that every nonnegative distribution can be represented as a mixture of two-point distributions. Thus, a lower bound that holds uniformly for all two-point product laws also holds for the original product law.

\begin{definition}[Mean-one two-point distribution]
\label{def.two_point_law}
For $0\leq x<1<y$, define the two-point distribution with mean one
\begin{equation}
Q_{x,y} := \frac{y-1}{y-x}\delta_x + \frac{1-x}{y-x}\delta_y.
\label{eq.two_point_law}
\end{equation}
To include degenerate distributions, let
$$
\Theta := \{(1,1)\} \sqcup \left\{ (x,y):0\leq x<1<y<\infty \right\},
$$
and set $Q_{1,1}:=\delta_1$.
\end{definition}

\begin{proposition}[Two-point decomposition]
\label{prop.two_point_decomposition}
For every $\nu\in \mathcal{M}(1)$, there exists a probability measure $\eta_\nu$ on $\Theta$ such that
\begin{equation}
\nu(A) = \int_\Theta Q_\theta(A)\,\eta_\nu(\dd\theta).
\label{eq.two_point_mixture}
\end{equation}
$A\subseteq[0,\infty)$ is Borel.
\end{proposition}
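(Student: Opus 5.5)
We construct $\eta_\nu$ explicitly by pairing mass below the mean with mass above the mean, using the standard "mean-preserving coupling" of the two sides. If $\nu=\delta_1$, take $\eta_\nu=\delta_{(1,1)}$. In general, write $\nu=\nu(\{1\})\,\delta_1+\nu_-+\nu_+$, where $\nu_-$ is the restriction of $\nu$ to $[0,1)$ and $\nu_+$ the restriction to $(1,\infty)$. The atom at $1$ contributes $\nu(\{1\})\,\delta_{(1,1)}$ to $\eta_\nu$. Since $\int x\,\nu(\dd x)=1$, the two sides balance:
\begin{equation*}
m:=\int_{[0,1)}(1-x)\,\nu_-(\dd x)=\int_{(1,\infty)}(y-1)\,\nu_+(\dd y).
\end{equation*}
If $m=0$, both $\nu_-$ and $\nu_+$ vanish and we are done. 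Otherwise $m\in(0,\infty)$, and $m$ is finite because $\nu$ has finite mean.

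Next define the candidate mixing measure on $\{(x,y):0\le x<1<y\}$ by
\begin{equation*}
\eta_0(\dd x,\dd y):=\frac{y-x}{m}\,(1-x)^{0}\,\nu_-(\dd x)\,\nu_+(\dd y)\cdot\frac{1}{1}\;\;\text{(to be normalized as below)},
\end{equation*}
more precisely $\eta_0(\dd x,\dd y):=\frac{y-x}{m}\,\nu_-(\dd x)\,\nu_+(\dd y)$. We then set $\eta_\nu:=\nu(\{1\})\delta_{(1,1)}+\eta_0$.

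The main computation checks that integrating $Q_{x,y}$ against $\eta_0$ reproduces $\nu_-+\nu_+$. The weight of the lower atom $x$ is $\frac{y-1}{y-x}\cdot\frac{y-x}{m}=\frac{y-1}{m}$. Integrating over $y$ against $\nu_+$ gives $\frac{1}{m}\int(y-1)\,\nu_+(\dd y)=1$, so the $x$-marginal contribution is exactly $\nu_-(\dd x)$. Symmetrically, the upper atom $y$ receives weight $\frac{1-x}{m}$, which integrates against $\nu_-$ to $1$ and returns $\nu_+(\dd y)$. Thus for Borel $A$,
\begin{equation*}
\int Q_\theta(A)\,\eta_0(\dd\theta)=\nu_-(A)+\nu_+(A).
\end{equation*}
This identity is justified by Tonelli's theorem, since all integrands are nonnegative.

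Total mass then follows by taking $A=[0,\infty)$: $\eta_\nu(\Theta)=\nu(\{1\})+\nu_-([0,1))+\nu_+((1,\infty))=1$, so $\eta_\nu$ is a probability measure.

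The only delicate points are measure-theoretic. First, $(x,y)\mapsto Q_{x,y}(A)$ must be Borel on $\Theta$. This holds because it equals $\frac{y-1}{y-x}\mathbbm 1_A(x)+\frac{1-x}{y-x}\mathbbm 1_A(y)$ on the non-degenerate part, and $\Theta$ is a Borel set with an isolated extra point. Second, one must verify $\eta_0$ is finite. This needs $\int(y-x)\,\nu_-\otimes\nu_+<\infty$, which follows from finiteness of the mean together with $\nu_\pm$ being finite measures. Neither point is a real obstacle.
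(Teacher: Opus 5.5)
Your construction is exactly the paper's: the atom at $1$ goes to $\nu(\{1\})\,\delta_{(1,1)}$, the rest is the kernel $\frac{y-x}{c_\nu}\,\nu(\dd x)\,\nu(\dd y)$ on $\{0\le x<1<y\}$, and the mixture identity is checked by the same marginal computation under Tonelli. You get total mass by plugging in $A=[0,\infty)$ rather than computing it directly, and you also justify measurability of $\theta\mapsto Q_\theta(A)$, which the paper leaves implicit; just delete the garbled first display for $\eta_0$ (the stray factor $(1-x)^0\cdot\frac11$), and note that measurability only needs $\{(1,1)\}$ to be a Borel set, not an isolated point.
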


\begin{proof}
The mean condition gives
\begin{equation}
c_\nu
:= \int_{[0,1)}(1-x)\,\nu(\dd x)
= \int_{(1,\infty)}(y-1)\,\nu(\dd y).
\label{eq.mean_balance}
\end{equation}
If $c_\nu=0$, then $\nu=\delta_1$, and we take $\eta_\nu=\delta_{(1,1)}$.
Suppose $c_\nu>0$. Define
\begin{equation}
\begin{aligned}
\eta_\nu(\{(1,1)\}) &:= \nu(\{1\}), \\
\eta_\nu(\dd x,\dd y) &:= \frac{y-x}{c_\nu}\, \nu(\dd x)\nu(\dd y), \quad 0\leq x<1<y.
\end{aligned}
\end{equation}
$\eta_\nu$ is a probability measure as
\begin{equation}
\begin{aligned}
\eta_\nu(\Theta)
&= \nu(\{1\}) + \frac1{c_\nu} \int_{[0,1)} \int_{(1,\infty)} (y-x)\,\nu(\dd y)\nu(\dd x)\\
&= \nu(\{1\}) + \nu([0,1)) + \nu((1,\infty)) = 1.
\end{aligned}
\end{equation}
For every bounded nonnegative Borel function $f$,
\begin{equation}
\begin{aligned}
& \int_\Theta \int_{[0,\infty)} f(z)\,Q_\theta(\dd z)\, \eta_\nu(\dd\theta) \\
&= \nu(\{1\})f(1) + \frac1{c_\nu} \int_{[0,1)} \int_{(1,\infty)} \left[ (y-1)f(x)+(1-x)f(y) \right] \nu(\dd y)\nu(\dd x)\\
&= \nu(\{1\})f(1) + \int_{[0,1)}f(x)\,\nu(\dd x) + \int_{(1,\infty)}f(y)\,\nu(\dd y) \\
&= \int_{[0,\infty)}f(z)\,\nu(\dd z).
\end{aligned}
\end{equation}
Taking $f=\mathbf{1}_A$ proves \cref{eq.two_point_mixture}.
\end{proof}

\begin{lemma}[Preservation of a common lower bound]
\label{lem.common_lower_bound}
Let $\nu_i \in \mathcal{M}(1)$ have the representation in \cref{eq.two_point_mixture} with mixing measure $\eta_i$,
and let $A\subseteq[0,\infty)^n$ be Borel. If $q\in[0,1]$ satisfies
\begin{equation}
\left( \bigotimes_{i=1}^nQ_{\theta_i} \right)(A) \geq q
\end{equation}
for $(\eta_1\otimes\cdots\otimes\eta_n)$-almost every $(\theta_1,\ldots,\theta_n)$, then
$$
\left( \bigotimes_{i=1}^n\nu_i \right)(A) \geq q.
$$
\end{lemma}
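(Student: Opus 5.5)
The plan is to write the product measure $\bigotimes_i\nu_i$ as a mixture of the product measures $\bigotimes_iQ_{\theta_i}$ and then integrate the assumed pointwise bound. By \cref{prop.two_point_decomposition}, each $\nu_i(B)=\int_\Theta Q_{\theta}(B)\,\eta_i(\dd\theta)$ for Borel $B\subseteq[0,\infty)$.

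The first step is a measurability check. We need to know that $\theta\mapsto Q_\theta(B)$ is Borel on $\Theta$, and more generally that $(\theta_1,\ldots,\theta_n)\mapsto(\bigotimes_iQ_{\theta_i})(A)$ is Borel on $\Theta^n$. For a single coordinate, $Q_\theta(B)=\frac{y-1}{y-x}\mathbf 1_B(x)+\frac{1-x}{y-x}\mathbf 1_B(y)$ off the isolated point $(1,1)$, which is visibly Borel in $(x,y)$. For the product, I would note that
\[
\Bigl(\bigotimes_{i}Q_{\theta_i}\Bigr)(A)=\sum_{\epsilon\in\{0,1\}^n}\prod_i w_{\epsilon_i}(\theta_i)\,\mathbf 1_A\bigl(z_{\epsilon_1}(\theta_1),\ldots,z_{\epsilon_n}(\theta_n)\bigr).
\]
Here $z_0(\theta)=x$, $z_1(\theta)=y$, and the $w$'s are the two weights. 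Every term in this finite sum is a product of Borel functions composed with Borel maps, so the whole expression is Borel. This makes the kernel $\theta\mapsto Q_\theta$ a genuine probability kernel.

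The second step is the mixture identity
\[
\Bigl(\bigotimes_i\nu_i\Bigr)(A)=\int_{\Theta^n}\Bigl(\bigotimes_iQ_{\theta_i}\Bigr)(A)\,(\eta_1\otimes\cdots\otimes\eta_n)(\dd\theta).
\]
I would prove it first for rectangles $A=B_1\times\cdots\times B_n$. In that case it follows from Fubini/Tonelli, since the integrand factorizes as $\prod_iQ_{\theta_i}(B_i)$. Both sides are probability measures in $A$: the right side is countably additive by monotone convergence. Rectangles form a $\pi$-system generating the Borel $\sigma$-algebra of $[0,\infty)^n$, so Dynkin's $\pi$–$\lambda$ theorem extends the identity to all Borel $A$.

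Finally, the integrand is at least $q$ almost everywhere with respect to the probability measure $\eta_1\otimes\cdots\otimes\eta_n$, so the integral is at least $q$. The only real obstacle is the measurability and extension bookkeeping in the first two steps; once the mixture identity is established, the conclusion is immediate.
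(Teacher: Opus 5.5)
Your proposal is correct and follows the same route as the paper. The paper simply writes $\bigl(\bigotimes_i\nu_i\bigr)(A)=\int_{\Theta^n}\bigl(\bigotimes_iQ_{\theta_i}\bigr)(A)\prod_i\eta_i(\dd\theta_i)$ ``by Tonelli's theorem'' and then integrates the almost-everywhere bound $\geq q$; your measurability check and the rectangle-plus-$\pi$--$\lambda$ extension supply the bookkeeping it leaves implicit (one small slip: $(1,1)$ is not isolated in $\Theta\subset\mathbb R^2$, which is harmless since singletons are Borel).
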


\begin{proof}
By Tonelli's theorem,
\begin{align}
\left( \bigotimes_{i=1}^n\nu_i \right)(A)
&= \int_{\Theta^n} \left( \bigotimes_{i=1}^nQ_{\theta_i} \right)(A) \prod_{i=1}^n\eta_i(\dd\theta_i) \notag\\
&\geq \int_{\Theta^n} q \prod_{i=1}^n\eta_i(\dd\theta_i) = q.
\end{align}
\end{proof}

\subsection{Bernoullification}
\label{subsec.bernoullification}

In this subsection, we reduce the original problem to the aforementioned Bernoulli theorem and prove the $c(\bm{\mu},\lambda) \geq \min_{1\leq i\leq n} q_i(\bm{\mu},\lambda)$ direction for \cref{thm.main}.

If $\mu_i=0$, then nonnegativity implies $X_i=0$ a.s. Since the means are ordered, all zero means form an initial block. Deleting this block does not change either the event or the minimum in \cref{eq.samuels_formula}. Therefore, it suffices to assume $0<\mu_1\leq\cdots\leq\mu_n$.

Set
\begin{equation}
Z_i := \frac{X_i}{\mu_i}, \quad \nu_i := \text{Law}(Z_i) \in \mathcal{M}(1).
\end{equation}
Apply \cref{prop.two_point_decomposition} to the law of each $Z_i$. By \cref{lem.common_lower_bound}, it suffices to establish a common lower bound for every fixed tuple $(\theta_1,\ldots,\theta_n)\in\Theta^n$.

Write $\theta_i=(x_i,y_i)$. For a nondegenerate coordinate, let
\begin{equation}
Y_i := x_i+(y_i-x_i)B_i,
\qquad
p_i := \frac{1-x_i}{y_i-x_i},
\qquad
B_i\sim\operatorname{Bernoulli}(p_i).
\label{eq.two_point_bernoulli}
\end{equation}
For $\theta_i=(1,1)$, set $x_i=y_i=1$, $p_i=0$ (or any fixed value in $[0,1]$). In both cases $Y_i\sim Q_{\theta_i}$.
Define
\begin{equation}
\Lambda := \lambda-\sum_{i=1}^n\mu_ix_i
= \delta+\sum_{i=1}^n\mu_i(1-x_i) >0, \quad
\overline w_i := \frac{\mu_i(y_i-x_i)}{\Lambda}.
\end{equation}
Then
\begin{equation}
\begin{aligned}
\sum_{i=1}^n\mu_iY_i<\lambda
&\Longleftrightarrow
\sum_{i=1}^n
\mu_i(y_i-x_i)B_i
<
\lambda-\sum_{i=1}^n\mu_ix_i \\
&\Longleftrightarrow
\sum_{i=1}^n\overline w_iB_i<1.
\end{aligned}
\end{equation}
Define $w_i := \min\{\overline w_i,1\}$.
The event of interest is unchanged:
\begin{equation}
\mathbf{1}\left\{ \sum_{i=1}^n\overline w_iB_i<1 \right\}
= \mathbf{1}\left\{ \sum_{i=1}^nw_iB_i<1 \right\}.
\end{equation}
This is because, if $B_i=1$ for a coordinate with $\overline w_i\geq1$, both sums are at least one. If all such Bernoulli variables are zero, the two sums coincide.

Let
\begin{equation}
T := \sum_{i=1}^nw_iB_i,
\qquad
a_i := \frac{\mu_i}{\Lambda}>0,
\qquad
d := 1-\mathbb{E}T.
\end{equation}
We verify
\begin{equation}
\begin{aligned}
w_ip_i &\leq \overline w_ip_i
= \frac{\mu_i(1-x_i)}{\Lambda}
\leq \frac{\mu_i}{\Lambda}
= a_i, \quad 1\leq i \leq n.
\end{aligned}
\end{equation}
and
\begin{equation}
d = 1-\sum_{i=1}^nw_ip_i
\geq 1- \frac{\sum_{i=1}^n\mu_i(1-x_i)}{\Lambda}
= \frac{\delta}{\Lambda} >0.
\label{eq.slack_bound}
\end{equation}

Thus \cref{thm.bernoulli} applies. Since every factor on the right-hand side of \cref{eq.bernoulli_bound} is increasing in $d$,
\begin{equation}
\begin{aligned}
\mathbb{P}\left( \sum_{i=1}^n\mu_iY_i<\lambda \right)
&=
\mathbb{P}(T<1) \\
&\geq
\min_{1\leq i\leq n} \prod_{j=i}^n \left( 1- \frac{a_j}{ d+\sum_{k=i}^na_k } \right) \\
&\geq
\min_{1\leq i\leq n} \prod_{j=i}^n \left( 1- \frac{\mu_j/\Lambda}{ \delta/\Lambda+\sum_{k=i}^n\mu_k/\Lambda } \right) \\
&=
\min_{1\leq i\leq n} \prod_{j=i}^n \left( 1- \frac{\mu_j}{ \lambda-\sum_{k=1}^{i-1}\mu_k } \right) \\
&= \min_{1\leq i\leq n} q_i(\bm{\mu},\lambda)
\end{aligned}
\end{equation}
Let $A = \left\{ (z_1,...,z_n)\in[0,\infty)^n: \sum_i\mu_i z_i<\lambda \right\}$. Then for fixed $\theta_1,..,\theta_n$,
$$
\left(\bigotimes_{i=1}^nQ_{\theta_i}\right)(A)
 = \mathbb{P}\left( \sum_{i=1}^n\mu_iY_i<\lambda\right)
 \geq \min_{1\leq i\leq n} q_i(\bm{\mu},\lambda).
$$
This bound is independent of the mixing parameters. Therefore \cref{lem.common_lower_bound} transfers it to the original product law.
$$
\begin{aligned}
\left( \bigotimes_{i=1}^n \nu_i \right)(A)
= \mathbb P\left(\sum_i\mu_iZ_i<\lambda\right)
= \mathbb P\left(\sum_iX_i<\lambda\right)
\geq \min_{1\leq i\leq n} q_i(\bm{\mu},\lambda).
\end{aligned}
$$

\subsection{Sharpness}
\label{subsec.sharpness}

Fix $i\in\{1,\ldots,n\}$. Construct $X_1,...,X_n$ such that
\[
X_j :=
\begin{cases}
\mu_j, & j<i,\\[4pt]
D_i B_j, \ B_j\sim\operatorname{Bernoulli}\!\left(\dfrac{\mu_j}{D_i}\right), & j\ge i,
\end{cases}
\]
where $(B_j)_{j\ge i}$ are independent. Since
\begin{equation}
\begin{aligned}
\sum_{j=1}^nX_j<\lambda
&\Longleftrightarrow
\sum_{j=1}^{i-1}\mu_j + D_i\sum_{j=i}^nB_j < \sum_{j=1}^{i-1}\mu_j+D_i \\
&\Longleftrightarrow
B_i=\cdots=B_n=0,
\end{aligned}
\end{equation}
we have
\begin{equation}
\mathbb{P}\left( \sum_{j=1}^nX_j<\lambda \right)
=
\prod_{j=i}^n \left( 1-\frac{\mu_j}{D_i} \right)
=
q_i(\bm{\mu},\lambda).
\label{eq.sharp_probability}
\end{equation}
Every candidate is therefore attained. Taking an index $i$ minimizing $q_i$ proves $c(\bm{\mu},\lambda) \leq \min_{1\leq i\leq n} q_i(\bm{\mu},\lambda)$  and completes the proof of \cref{thm.main}.

\section{Proof of \cref{thm.bernoulli}}
\label{sec.bernoulli_proof}

The core of the proof is a carefully designed Bellman-type induction.

\subsection{Interior parameters}
We first handle interior parameters
\begin{equation}
\label{eq.interior_parameters}
0\leq w_i\leq1,
\qquad
0<p_i<1,
\qquad
1\leq i\leq n.
\end{equation}
Since $p_i<1$, the all-zero Bernoulli state has positive probability, so $\mathbb{P}(T<1)>0$. Define
\begin{equation}
\widetilde p_i := \mathbb{P}(B_i=1\mid T<1), \quad
L := \mathbb ET-\mathbb E[T\mid T<1], \quad
R := 1-\mathbb E[T\mid T<1] = d + L.
\end{equation}
For deletion of coordinate $i$, write
\begin{equation}
T_{-i} :=  T - w_i B_i, \quad
L_{-i} := \mathbb{E}T_{-i} - \mathbb{E}[T_{-i}\mid T_{-i}<1], \quad
R_{-i} := 1- \mathbb{E}[T_{-i}\mid T_{-i}<1].
\end{equation}
Set
\begin{equation}
\psi(x)
:=
\begin{cases}
-\log(1-x)/x,&0<x<1,\\
1,&x=0.
\end{cases}
\end{equation}
We'll use the fact that $\psi$ is monotonically increasing on $[0,1)$.

\begin{claim}
\label{claim.basic_bounds}
Under \cref{eq.interior_parameters},
\begin{equation}
0\leq\widetilde p_i\leq p_i,
\qquad
0\leq L\leq\sum_{i=1}^na_i = C_1,
\qquad
\mathbb{P}(T<1)\geq\frac{d}{R}.
\label{eq.basic_properties}
\end{equation}
\end{claim}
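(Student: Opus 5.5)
We prove the three assertions of \cref{claim.basic_bounds} separately; each follows from a monotonicity (FKG-type) observation or a one-line Markov-type argument.

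\textbf{First bound.} The event $\{T<1\}$ is a decreasing function of $(B_1,\ldots,B_n)$, since all $w_j\ge0$. Condition on $(B_j)_{j\neq i}$. On each such configuration, the conditional probability of $\{T<1\}$ given $B_i=1$ is at most that given $B_i=0$. Therefore
\[
\mathbb P(T<1\mid B_i=1)=\mathbb P(T_{-i}+w_i<1)\le \mathbb P(T_{-i}<1)=\mathbb P(T<1\mid B_i=0),
\]
using independence of $B_i$ and $T_{-i}$. Hence $\mathbb P(T<1\mid B_i=1)\le \mathbb P(T<1)$, and by Bayes' rule
\[
\widetilde p_i = p_i\,\frac{\mathbb P(T<1\mid B_i=1)}{\mathbb P(T<1)}\le p_i .
\]
Nonnegativity of $\widetilde p_i$ is trivial. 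The denominator is positive because $p_i<1$ for all $i$.

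\textbf{Second bound.} By linearity,
\[
\mathbb E[T\mid T<1]=\sum_i w_i\widetilde p_i \quad\text{and}\quad \mathbb ET=\sum_i w_ip_i,
\]
so $L=\sum_i w_i(p_i-\widetilde p_i)$. Each term is nonnegative by the first bound, giving $L\ge0$. For the upper bound, drop the $\widetilde p_i$ terms and use the hypothesis $w_ip_i\le a_i$ from \cref{thm.bernoulli}:
\[
L\le \sum_i w_ip_i\le \sum_i a_i=C_1 .
\]

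\textbf{Third bound.} Write $P=\mathbb P(T<1)$ and decompose
\[
\mathbb ET = P\,\mathbb E[T\mid T<1]+(1-P)\,\mathbb E[T\mid T\ge1].
\]
Since $\mathbb E[T\mid T\ge1]\ge1$ (or $1-P=0$), we get
\[
\mathbb ET\ge P\,\mathbb E[T\mid T<1]+(1-P).
\]
Rearranging gives $1-\mathbb ET\le P\,(1-\mathbb E[T\mid T<1])$, that is, $d\le PR$. Here $R=d+L\ge d>0$, so $P\ge d/R$.

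The only step requiring care is the monotone-coupling argument in the first bound; the rest is bookkeeping.
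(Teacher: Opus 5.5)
Your proposal is correct and follows the paper's proof essentially step for step. You use the same comparison $\mathbb P(T_{-i}<1-w_i)\le\mathbb P(T_{-i}<1)$ with Bayes' rule for $\widetilde p_i\le p_i$, the same representation $L=\sum_i w_i(p_i-\widetilde p_i)$ bounded termwise by $w_ip_i\le a_i$, and the same split of $\mathbb ET$ over $\{T<1\}$ and $\{T\ge1\}$ to get $d\le \mathbb P(T<1)R$.
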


\begin{proof}
Independence of $B_i$ and $T_{-i}$ gives
\begin{equation}
\begin{aligned}
\mathbb{P}(T<1\mid B_i=1)
&=
\mathbb{P}(T_{-i}<1-w_i) \\
&\leq
\mathbb{P}(T_{-i}<1) \\
&=
\mathbb{P}(T<1\mid B_i=0).
\end{aligned}
\end{equation}
Bayes' formula yields
\begin{align}
\widetilde p_i
&=
\frac{
p_i\mathbb{P}(T_{-i}<1-w_i)
}{
p_i\mathbb{P}(T_{-i}<1-w_i)
+
(1-p_i)\mathbb{P}(T_{-i}<1)
}
\leq p_i.
\end{align}
Also, $L = \sum_{i=1}^nw_i(p_i-\widetilde p_i)$, and $0 \leq w_i(p_i-\widetilde p_i) \leq w_ip_i \leq a_i$, which proves $0\leq L\leq\sum_i a_i = C_1$.

Finally,
\begin{equation}
\begin{aligned}
1-d
= \mathbb{E}T
&= \mathbb{E}\left[ T\mathbf{1}_{\{T<1\}} \right] + \mathbb{E}\left[ T\mathbf{1}_{\{T\geq1\}} \right] \\
&\geq \mathbb{P}(T<1) \mathbb{E}[T\mid T<1] + \mathbb{P}(T\geq1) \\
&= \mathbb{P}(T<1)(1-R) + 1-\mathbb{P}(T<1) \\
&= 1-\mathbb{P}(T<1)R.
\end{aligned}
\end{equation}
Hence
$
\mathbb{P}(T<1)\geq d/R
$.
\end{proof}

In particular, if $L\leq a_n$, then
\begin{equation}
\label{eq.small_deficit_case}
\mathbb P(T<1)
\geq \frac d{d+L}
\geq \frac d{d+a_n}
= 1-\frac{a_n}{d+a_n}
= \prod_{j=n}^n \left( 1-\frac{a_j}{d+C_n} \right).
\end{equation}
Therefore, in the following, we only need to consider $L>a_n$, in which case $R=d+L>a_n$.

\begin{claim}
\label{claim.induction_argument}
Assume $0<p_i<1$ for every $i$ and $R>a_n$. Then there exists $0\leq\ell_i\leq a_i,\ 1\leq i\leq n$, such that $\sum_{i=1}^n\ell_i=L$ and
$$
\mathbb P(T<1)
\geq
\prod_{i=1}^n \left( 1-\frac{a_i}{R} \right)^{\ell_i/a_i}.
$$
\end{claim}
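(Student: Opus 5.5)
The plan is to induct on $n$, peeling off one coordinate at a time with an exact Bayes identity. The allocation $\ell_i$ is meant to be the conditional mean shift attributable to coordinate $i$, suitably reallocated. The basic identity comes from the Bayes computation in \cref{claim.basic_bounds}. Since $\mathbb P(B_i=0\mid T<1)=(1-p_i)\mathbb P(T_{-i}<1)/\mathbb P(T<1)$, we have
\begin{equation*}
\mathbb P(T<1)=\mathbb P(T_{-i}<1)\cdot\frac{1-p_i}{1-\widetilde p_i}.
\end{equation*}
The conditional means also decompose exactly: $L=L_{-i}+w_i(p_i-\widetilde p_i)+\Delta_i$. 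Here $\Delta_i:=\mathbb E[T_{-i}\mid T_{-i}<1]-\mathbb E[T_{-i}\mid T<1]$ records how conditioning on $T<1$ rather than $T_{-i}<1$ shifts the remaining coordinates. Each $\widetilde p_j$ computed for $T$ is at most the corresponding one for $T_{-i}$, so $\Delta_i\ge0$.

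The base case $n=1$ I would check directly. Then $\widetilde p_1=0$ when $w_1>0$, so $L=w_1p_1\le a_1$ and $R=1$, and the required inequality is $1-p_1\ge(1-a_1)^{w_1p_1/a_1}$. This follows from concavity of $t\mapsto\log(1-t)$, using $w_1p_1\le a_1$ and $w_1\le1$.

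For the inductive step I would delete the coordinate $n$ with the largest $a_n$. There are two cases for the reduced sum.
\begin{itemize}
\item If $R_{-n}\le a_{n-1}$, I would bound $\mathbb P(T_{-n}<1)\ge (1-d)/R_{-n}$-type quantities via \cref{claim.basic_bounds}.
\item Otherwise I apply the induction hypothesis to $T_{-n}$, obtaining weights $\ell'_1,\dots,\ell'_{n-1}$ with $\sum\ell'_j=L_{-n}$ and the product bound with $R_{-n}$ in place of $R$.
\end{itemize}
Writing $-\log(1-a/R)=(a/R)\psi(a/R)$, the target inequality becomes
\begin{equation*}
-\log\mathbb P(T<1)\le \sum_i \frac{\ell_i}{R}\,\psi\!\left(\frac{a_i}{R}\right).
\end{equation*}
So it suffices to show two things. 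First,
\begin{equation*}
\log\frac{1-\widetilde p_n}{1-p_n}\le \frac{\ell_n}{R}\,\psi\!\left(\frac{a_n}{R}\right)
\end{equation*}
for an $\ell_n\in[0,a_n]$. Second, the terms coming from $T_{-n}$, which carry $R_{-n}$, can be dominated after passing to $R$. For the latter I would take $\ell_j\ge\ell'_j$, using that $\psi$ is increasing. I would then redistribute the excess $\Delta_n$ (plus any overflow of $\ell_n$ beyond $a_n$) onto coordinates with room $\ell_j<a_j$. Because $\psi(a_j/R)\le\psi(a_n/R)$ for $j<n$, moving mass toward smaller-indexed coordinates only makes the right-hand side smaller. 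So this step must be arranged so the needed inequality is used in the favourable direction, probably by charging excess first to coordinate $n$ up to $a_n$.

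The main obstacle is comparing $R_{-n}$ with $R$, together with the single-coordinate inequality. The identity gives $R=d+L$ and $R_{-n}=d_{-n}+L_{-n}$ with $d_{-n}=d+w_np_n$. Hence $R-R_{-n}=w_n(p_n-\widetilde p_n)+\Delta_n-w_np_n=\Delta_n-w_n\widetilde p_n$, which can have either sign, so monotonicity of the factors in $R$ cannot be used blindly. My first attempt would be to prove
\begin{equation*}
\log\frac{1-\widetilde p_n}{1-p_n}+\sum_{j<n}\frac{\ell'_j}{R_{-n}}\psi\!\left(\frac{a_j}{R_{-n}}\right)\le\sum_{j\le n}\frac{\ell_j}{R}\psi\!\left(\frac{a_j}{R}\right)
\end{equation*}
directly, as a two-variable inequality. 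I would use the explicit Bayes form of $\widetilde p_n$ in terms of the ratio $\mathbb P(T_{-n}<1-w_n)/\mathbb P(T_{-n}<1)$, together with $R>a_n$ and $\mathbb P(T<1)\ge d/R$. If this fails, the fallback is to strengthen the inductive hypothesis to a statement uniform in a free parameter $R'\ge R$, so that the mismatch between $R_{-n}$ and $R$ is absorbed by the induction itself.
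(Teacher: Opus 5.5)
Your proposal has a genuine gap: the inductive step is never closed, and it stalls exactly at the obstacle you name. By always deleting the fixed coordinate $n$, you get $R-R_{-n}=\Delta_n-w_n\widetilde p_n$, which can have either sign. When $R_{-n}<R$, three difficulties arise together.
First, the inductive coefficients $\frac1{a_j}\log\frac{R_{-n}}{R_{-n}-a_j}$ are \emph{larger} than the ones at $R$, because $r\mapsto\frac1{a_j}\log\frac r{r-a_j}$ is decreasing. So $\ell_j=\ell'_j$ does not suffice, and you would have to spend extra budget on $j<n$.
Second, the budget available for coordinate $n$, namely $L-L_{-n}=w_np_n+(R-R_{-n})$, exceeds $w_np_n$ and can exceed $a_n$.
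Third, pushing that overflow onto $j<n$ lowers $\sum_j\ell_j\psi(a_j/R)/R$, which is the wrong direction for an upper bound on $-\log\mathbb P(T<1)$.
The side case $R_{-n}\le a_{n-1}$, the ``two-variable inequality'', and the strengthened hypothesis are all left as intentions. So the proposal does not yet prove the claim.

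The missing idea, which the paper uses, is to choose the deleted coordinate adaptively so that $R_{-i}\ge R$. Since $\Var(T\mid T<1)=\sum_iw_i\Cov(T,B_i\mid T<1)\ge0$, some $i$ with $w_i>0$ has $\Cov(T,B_i\mid T<1)\ge0$. Given $\{T<1,B_i=0\}$, $T$ has the law of $T_{-i}$ given $T_{-i}<1$, so $\Cov(T,B_i\mid T<1)=(1-\widetilde p_i)(R_{-i}-R)$.
Once $R_{-i}\ge R>a_n$, the induction hypothesis applies to $T_{-i}$, you may replace $R_{-i}$ by $R$ termwise, and $\ell_i:=L-L_{-i}\le w_ip_i\le a_i$. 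No redistribution is needed.
For the single-coordinate term, write your factor $(1-p_i)/(1-\widetilde p_i)$ as $1-\rho_i$ with $\rho_i=p_i\mathbb P(1-w_i\le T_{-i}<1)/\mathbb P(T_{-i}<1)$. The exact identity $L-L_{-i}=\frac{p_i}{\mathbb P(T_{-i}<1)}\mathbb E[(T_{-i}+w_i-\mathbb E[T\mid T<1])\mathbf 1\{1-w_i\le T_{-i}<1\}]$, together with $T_{-i}+w_i\ge1$ on that event, gives $\rho_iR\le L-L_{-i}\le a_i$. Monotonicity of $\psi$ then yields $-\log(1-\rho_i)\le\frac{\ell_i}{a_i}\log\frac R{R-a_i}$.

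There are two smaller slips. In the base case, $\widetilde p_1=0$ only when $w_1=1$; for $0<w_1<1$ one has $T<1$ surely and $L=0$, which is trivial. Also, your coordinatewise assertion that each $\widetilde p_j$ can only decrease when passing from conditioning on $T_{-i}<1$ to conditioning on $T<1$ is false for weighted sums in general. The conclusion $\Delta_i\ge0$ still holds, but for a different reason: $t\mapsto\mathbb E[T_{-i}\mid T_{-i}<t]$ is nondecreasing.
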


\begin{proof}
We argue by induction on $n$. We use the convention that $\max\varnothing=0$, $\prod_{\varnothing}=1$, and $\sum_{\varnothing}=0$.

For $n=0$, we have $T\equiv 0$, so $L=0$, and $\mathbb{P}(T<1)=1$. The claim holds.

For $n\geq 1$, if $w_i=0$ for all $1\leq i \leq n$, then $T\equiv 0$. The claim holds. So we assume at least one $w_i>0$. Since $\Var(T\mid T<1)  = \sum_{i=1}^n w_i\Cov(T,B_i\mid T<1) \geq0$, there exists at least one index $i$ such that $\Cov(T,B_i\mid T<1)\geq0$ and $w_i>0$. Fix such an index in the following.

We have
\begin{equation}
\begin{aligned}
\operatorname{Cov}(T,B_i\mid T<1)
&= \mathbb E[TB_i\mid T<1] - \widetilde p_i\mathbb E[T\mid T<1] \\
&= \mathbb E[T\mid T<1] - (1-\widetilde p_i)\mathbb{E}[T\mid T<1,B_i=0] - \widetilde p_i\mathbb E[T\mid T<1] \\
&= (1-\widetilde p_i) \left( \mathbb{E}[T\mid T<1] - \mathbb{E}[T\mid T<1,B_i=0] \right) \\
&= (1-\widetilde p_i) \left( \mathbb{E}[T\mid T<1] - \mathbb{E}[T_{-i}\mid T_{-i}<1] \right) \\
&= (1-\widetilde p_i)(R_{-i}-R) \geq 0,
\end{aligned}
\end{equation}
which gives
\begin{equation}
R_{-i}\geq R.
\label{eq.room_monotonicity}
\end{equation}

Define
\begin{equation}
\mathcal{A}_i := \{1-w_i\leq T_{-i}<1\},
\quad
\rho_i := \frac{p_i\mathbb{P}(\mathcal{A}_i)}{\mathbb{P}(T_{-i}<1)},
\quad
\Delta_i := L-L_{-i}.
\end{equation}
We have the following exact relationship
\begin{align}
\mathbb{P}(T<1)
&= (1-p_i)\mathbb{P}(T_{-i}<1)
+ p_i\mathbb{P}(T_{-i}<1-w_i)
\notag\\
&= (1-p_i)\mathbb{P}(T_{-i}<1) + p_i \left[ \mathbb{P}(T_{-i}<1) - \mathbb{P}(\mathcal{A}_i) \right]
\notag\\
&= \mathbb{P}(T_{-i}<1) - p_i\mathbb{P}(\mathcal{A}_i)
\notag\\
&= \mathbb{P}(T_{-i}<1)(1-\rho_i).
\label{eq.probability_recursion}
\end{align}
Next, we make some estimates on $\Delta_i$. First
\begin{equation}
\begin{aligned}
& \mathbb{P}(T<1)\mathbb{E}[T\mid T<1] \\
&= \mathbb{E}\left[ T\mathbf{1}_{\{T<1\}} \right] \\
&=
(1-p_i) \mathbb{E}\left[ T_{-i}\mathbf{1}_{\{T_{-i}<1\}} \right]
+
p_i \mathbb{E}\left[ (T_{-i}+w_i) \mathbf{1}_{\{T_{-i}<1-w_i\}} \right] \\
&=
(1-p_i) \mathbb{E}\left[ T_{-i}\mathbf{1}_{\{T_{-i}<1\}} \right]
+
p_i \mathbb{E}\left[ (T_{-i}+w_i) \mathbf{1}_{\{T_{-i}<1\}} \right]
- p_i \mathbb{E}\left[ (T_{-i}+w_i) \mathbf{1}_{\mathcal{A}_i} \right] \\
&=
\mathbb{E}\left[ T_{-i}\mathbf{1}_{\{T_{-i}<1\}} \right]
+
p_iw_i\mathbb{P}(T_{-i}<1)
- p_i \mathbb{E}\left[ (T_{-i}+w_i) \mathbf{1}_{\mathcal{A}_i} \right] \\
&=
\mathbb{P}(T_{-i}<1) \mathbb{E}[T_{-i}\mid T_{-i}<1]
+
p_iw_i\mathbb{P}(T_{-i}<1)
- p_i \mathbb{E}\left[ (T_{-i}+w_i) \mathbf{1}_{\mathcal{A}_i} \right].
\label{eq.truncated_first_moment}
\end{aligned}
\end{equation}
Combining \cref{eq.probability_recursion} and \cref{eq.truncated_first_moment} gives
\begin{equation}
\begin{aligned}
\Delta_i &= L - L_{-i} \\
&= w_ip_i - \left( \mathbb{E}[T\mid T<1] - \mathbb{E}[T_{-i}\mid T_{-i}<1] \right) \\
&= \frac{p_i}{\mathbb{P}(T_{-i}<1)}
\mathbb{E}\left[
\left(
T_{-i}+w_i-\mathbb{E}[T\mid T<1]
\right)
\mathbf{1}_{\mathcal{A}_i}
\right].
\label{eq.deficit_increment_identity}
\end{aligned}
\end{equation}
On $\mathcal{A}_i$, $T_{-i}+w_i\geq1$, so $T_{-i}+w_i-\mathbb{E}[T\mid T<1] \geq 1-\mathbb{E}[T\mid T<1] = R$, and
\begin{align}
\Delta_i
&\geq
\frac{p_iR\mathbb{P}(\mathcal{A}_i)}{\mathbb{P}(T_{-i}<1)}
= \rho_iR.
\label{eq.deficit_probability_comparison}
\end{align}
On the other hand, \cref{eq.room_monotonicity} gives $\Delta_i \leq w_ip_i \leq a_i$. Together,
\begin{equation}
0\leq\rho_i \leq \frac{\Delta_i}{R} \leq \frac{a_i}{R} <1.
\label{eq.single_step_bounds}
\end{equation}
The monotonicity of $\psi$ gives
\begin{equation}
-\log(1-\rho_i)
= \rho_i\psi(\rho_i)
\leq \frac{\Delta_i}{R} \psi\left(\frac{a_i}{R}\right)
= \frac{\Delta_i}{a_i} \log\frac{R}{R-a_i}.
\label{eq.single_step_log_loss}
\end{equation}

Since $R_{-i}\geq R>a_n\geq\max_{j\neq i}a_j$, we can apply the induction hypothesis to the deleted system $T_{-i}$: there exist $0\leq\ell_j\leq a_j$ for $j\neq i$ such that $\sum_{j\neq i}\ell_j=L_{-i}$ and
\begin{equation}
-\log\mathbb{P}(T_{-i}<1)
\leq
\sum_{j\neq i}
\frac{\ell_j}{a_j}
\log\frac{R_{-i}}{R_{-i}-a_j}.
\end{equation}
For fixed $a_j>0$,
\begin{equation}
\frac{\mathrm d}{\mathrm dr}
\left[ \frac1{a_j}\log\frac{r}{r-a_j} \right]
= -\frac1{r(r-a_j)} <0.
\end{equation}
So $R_{-i}\geq R$ implies
\begin{equation}
-\log\mathbb{P}(T_{-i}<1)
\leq
\sum_{j\neq i} \frac{\ell_j}{a_j} \log\frac{R}{R-a_j}.
\end{equation}
Combining this inequality with
\cref{eq.probability_recursion}
and
\cref{eq.single_step_log_loss},
\begin{align}
-\log\mathbb{P}(T<1)
&=
-\log\mathbb{P}(T_{-i}<1)
-
\log(1-\rho_i)
\notag\\
&\leq
\sum_{j\neq i}
\frac{\ell_j}{a_j}
\log\frac{R}{R-a_j}
+
\frac{\Delta_i}{a_i}
\log\frac{R}{R-a_i}.
\label{eq.ordered_loss_coefficients}
\end{align}
Set $\ell_i:=\Delta_i$. Then $0\leq\ell_i\leq a_i$, and $\sum_{j=1}^n\ell_j = \sum_{j\neq i}\ell_j+\ell_i = L_{-i}+\Delta_i = L$. Exponentiating the last inequality yields
$$
\mathbb P(T<1)
\geq
\prod_{j=1}^n \left( 1-\frac{a_j}{R} \right)^{\ell_j/a_j}.
$$
The induction is complete.

\end{proof}

\begin{claim}
\label{claim.interpolation_bound}
Assume $L> a_n$. Since $L\leq C_1$, choose $1\le i\le n-1$ such that $C_{i+1}\leq L\leq C_i$. Then
\begin{equation}
\mathbb{P}(T<1) \geq F_i(L),
\label{eq.interpolation_bound}
\end{equation}
where
\begin{equation}
F_i(L)
:=
\prod_{j=i+1}^n
\left( 1-\frac{a_j}{d+L} \right)
\left( 1-\frac{a_i}{d+L} \right)^{(L-C_{i+1})/a_i}.
\label{eq.interpolation_function}
\end{equation}
\end{claim}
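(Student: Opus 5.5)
Start from \cref{claim.induction_argument}. Since $L>a_n$ we have $R=d+L>a_n\ge a_j$ for all $j$, so the claim applies. It gives $0\le\ell_j\le a_j$ with $\sum_j\ell_j=L$ and
\begin{equation*}
-\log\mathbb P(T<1)\le \sum_{j=1}^n \ell_j\, g_j,
\qquad
g_j:=\frac1{a_j}\log\frac{R}{R-a_j}=\frac{1}{R}\,\psi\!\left(\frac{a_j}{R}\right).
\end{equation*}
We do not control the $\ell_j$ individually. The plan is therefore to bound the right-hand side by its maximum over the polytope $\{0\le \ell_j\le a_j,\ \sum_j\ell_j=L\}$. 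This is a linear program with box constraints and one equality, so the maximum is achieved greedily: put the full mass $a_j$ on the coordinates with the largest $g_j$, then a fractional remainder on the next one.

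The key observation is that $g_j$ is nondecreasing in $j$. Indeed $\psi$ is increasing on $[0,1)$, and $a_1\le\cdots\le a_n$ with $a_j/R<1$, so $g_j=\psi(a_j/R)/R$ is monotone in $a_j$. The greedy maximizer therefore saturates the top indices. It sets $\ell_j=a_j$ for $j\ge i+1$, which uses mass $C_{i+1}\le L$. It places the remainder $L-C_{i+1}\in[0,a_i]$ on index $i$ and sets $\ell_j=0$ for $j<i$. Here $i$ is the index with $C_{i+1}\le L\le C_i$; it exists with $i\le n-1$ because $a_n=C_n<L\le C_1$. To justify optimality rigorously, I would use a short exchange argument. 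If some $\ell_k>0$ with $k<m$ while $\ell_m<a_m$, moving mass $\varepsilon$ from $k$ to $m$ changes the objective by $\varepsilon(g_m-g_k)\ge0$. Repeating this reaches the greedy configuration without decreasing the objective.

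Substituting the greedy configuration gives
\begin{equation*}
-\log\mathbb P(T<1)\le \sum_{j=i+1}^n\log\frac{R}{R-a_j}+\frac{L-C_{i+1}}{a_i}\log\frac{R}{R-a_i}.
\end{equation*}
Exponentiating with $R=d+L$ gives exactly $F_i(L)$, which is \cref{eq.interpolation_bound}.

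The only delicate point is the monotonicity of $g_j$ in $j$ together with the tie-handling in the exchange argument. Both follow directly from the stated monotonicity of $\psi$ and the ordering of the $a_j$, so I expect no real obstacle. Boundary cases are harmless. When $L=C_i$ for several $i$ the two expressions agree, since the fractional exponent equals $1$ or $0$. When some $g_j$ are equal, the greedy maximizer is not unique, but the maximum value is unchanged.
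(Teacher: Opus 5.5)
Your proposal is correct and follows the paper's route. You invoke \cref{claim.induction_argument}, use that $\frac{1}{a_j}\log\frac{R}{R-a_j}=\psi(a_j/R)/R$ is nondecreasing in $j$, and bound $\sum_j\ell_j c_j$ by its value at the greedy allocation. The only difference is that the paper replaces your exchange iteration with the one-line identity $\sum_j\ell_jc_j-\bigl[\sum_{j>i}a_jc_j+(L-C_{i+1})c_i\bigr]=\sum_{j<i}\ell_j(c_j-c_i)+\sum_{j>i}(a_j-\ell_j)(c_i-c_j)\le 0$, which avoids termination and tie-handling bookkeeping.
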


\begin{proof}
We want to maximize the right-hand side of \cref{eq.ordered_loss_coefficients} under the constraints $0\leq\ell_j\leq a_j$ and $\sum_j\ell_j=L$.
Let
\begin{equation}
c_j := \frac1{a_j}\log\frac{R}{R-a_j} = \frac1R \psi\left(\frac{a_j}{R}\right).
\end{equation}
Recall that $a_1\leq\cdots\leq a_n$. By the monotonicity of $\psi$, we have $c_1\leq c_2\leq\cdots\leq c_n$.
Then
\begin{equation}
\label{eq.potential_function}
-\log\mathbb{P}(T<1) \leq \sum_{j=1}^n\ell_jc_j,
\end{equation}
Since
\begin{equation}
\ell_i-(L-C_{i+1})
= \ell_i-\sum_{j=1}^n\ell_j+\sum_{j=i+1}^na_j
= -\sum_{j=1}^{i-1}\ell_j + \sum_{j=i+1}^n(a_j-\ell_j).
\end{equation}
we have
\begin{equation}
\begin{aligned}
&
\sum_{j=1}^n\ell_jc_j
-
\left[ \sum_{j=i+1}^na_jc_j + (L-C_{i+1})c_i \right] \\
&=
\sum_{j=1}^{i-1}\ell_jc_j
+
\left[ \ell_i-(L-C_{i+1}) \right]c_i
+
\sum_{j=i+1}^n(\ell_j-a_j)c_j \\
&=
\sum_{j=1}^{i-1}
\ell_j(c_j-c_i)
+
\sum_{j=i+1}^n
(a_j-\ell_j)(c_i-c_j) \leq 0.
\label{eq.ordered_allocation_bound}
\end{aligned}
\end{equation}
The last inequality follows from $\ell_j\geq0$, $a_j-\ell_j\geq0$, and the fact that $c_1,...,c_n$ are ordered.
Combining \cref{eq.potential_function} and \cref{eq.ordered_allocation_bound},
\begin{align}
-\log\mathbb{P}(T<1)
&\leq
\sum_{j=i+1}^n
\log\frac{R}{R-a_j}
+
\frac{L-C_{i+1}}{a_i}
\log\frac{R}{R-a_i}.
\end{align}
Recall $R=d+L$. Exponentiation gives $\mathbb{P}(T<1) \geq F_i(L)$.

\end{proof}

\begin{claim}
\label{claim.endpoint_interpolation}
For all $1\leq i\leq n-1$ and $L \in [C_{i+1}, C_i]$, the function $F_i(L) \geq \min\{F_i(C_{i+1}),F_i(C_i)\}$.
\end{claim}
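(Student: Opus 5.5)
The plan is to show that $\log F_i$ is concave on $[C_{i+1},C_i]$. A concave function on an interval attains its minimum at an endpoint, which is exactly the claim.

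Write $R=d+L$ and $t=L-C_{i+1}\in[0,a_i]$. Then
\[
G(L):=\log F_i(L)=\sum_{j=i+1}^n\log\Bigl(1-\frac{a_j}{R}\Bigr)+\frac{t}{a_i}\log\Bigl(1-\frac{a_i}{R}\Bigr).
\]
Since $R>L>a_n\geq a_j$ for all $j$, every logarithm is finite and $G$ is smooth on the interval. I would treat the two parts of $G$ separately.

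\emph{The sum.} Each term is $g_j(R)=\log(R-a_j)-\log R$. Its second derivative is
\[
g_j''(R)=-\frac{1}{(R-a_j)^2}+\frac1{R^2}<0,
\]
so each $g_j$ is concave in $R$, hence in $L$.

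\emph{The product term.} This is $h(L)=\frac{t}{a_i}\,g_i(R)$ with $t'=1$ and $R'=1$. Differentiating twice gives
\[
h''=\frac{2}{a_i}g_i'(R)+\frac{t}{a_i}g_i''(R), \qquad g_i'(R)=\frac{a_i}{R(R-a_i)}>0.
\]
The first term is positive, so concavity of this piece is not automatic. I expect this to be the main step. Compute
\[
h''=\frac{2}{R(R-a_i)}-\frac{t}{a_i}\cdot\frac{a_i(2R-a_i)}{R^2(R-a_i)^2}=\frac{2R(R-a_i)-t(2R-a_i)}{R^2(R-a_i)^2}.
\]
At $t=0$ this is positive, so $h$ alone is not concave. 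We must therefore use the concavity of the sum, and the term $j=i+1$ is always present since $i\le n-1$. Its second derivative is
\[
g_{i+1}''=-\frac{a_{i+1}(2R-a_{i+1})}{R^2(R-a_{i+1})^2}.
\]
Because $x\mapsto x(2R-x)/(R-x)^2$ is increasing on $(0,R)$ and $a_{i+1}\ge a_i$, we get $g_{i+1}''\le -a_i(2R-a_i)/(R^2(R-a_i)^2)$. Adding this to $h''$ gives a numerator bounded by
\[
2R(R-a_i)-(t+a_i)(2R-a_i).
\]
This need not be negative, since $R$ can be large compared with $a_i$. So full concavity of $G$ may fail and the argument has to be refined.

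As a fallback, I would show that $G$ has no interior strict local minimum. One way is to prove that $G'$ changes sign at most once, from $+$ to $-$, i.e.\ that $G$ is unimodal. By the first derivative computation,
\[
G'(L)=\sum_{j>i}\frac{a_j}{R(R-a_j)}+\frac{t}{R(R-a_i)}+\frac1{a_i}\log\Bigl(1-\frac{a_i}{R}\Bigr).
\]
Multiply by $R$ and set $u=a_i/R$. Using $\frac{1}{u}\log(1-u)=-\psi(u)$, we get
\[
RG'=\sum_{j>i}\frac{a_j}{R-a_j}+\frac{t}{R-a_i}-\psi(u).
\]
As $L$ increases, $R$ increases, so $u$ decreases and $\psi(u)$ decreases, while the fractions change in competing ways. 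To settle the sign pattern, I would check that $RG'$ is strictly decreasing whenever $RG'=0$, by differentiating at such a point and substituting the zero condition.

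If this sign analysis gets messy, a cleaner route is quasi-concavity. View $F_i$ as the probability that a sum of Bernoulli-type factors is zero and interpolate: it suffices that $\log F_i$ along the segment lies above the chord's minimum. I would verify this by comparing with the endpoint values directly, using that each factor $(1-a_j/R)$ is increasing in $R$ while the exponent $t/a_i$ increases linearly.
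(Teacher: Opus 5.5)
Your fallback is the paper's route. The paper shows that $g_i=\log F_i$ satisfies $g_i''<0$ at every interior stationary point, so $g_i$ has no interior minimum. Your condition that $RG'$ be strictly decreasing at its zeros is the same thing, since $(RG')'=RG''$ wherever $G'=0$. You were also right to drop global concavity; it genuinely fails, e.g.\ at $t=0$ when $d$ is large.

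But the proposal stops exactly where the content of the claim lies. The sign of $G''$ at a stationary point is never established. Your one attempt at it, keeping only the $j=i+1$ term and not using the stationarity equation, cannot succeed, as you noticed.

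The closing ``quasi-concavity'' route is not an argument either:
\begin{itemize}
\item with the fractional exponent $t/a_i$, $F_i$ is not the probability of any event;
\item ``lies above the chord's minimum'' is just the claim restated;
\item the two monotonicities you invoke (each factor increasing in $R$, the exponent increasing in $t$) push $F_i$ in opposite directions, so they cannot locate the minimum.
\end{itemize}

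The missing idea is to keep the entire sum and substitute the stationarity equation into it. With $x_j=a_j/R$, $u=a_i/R$ and $s=t/R$,
\[
RG'=\sum_{j>i}\frac{x_j}{1-x_j}+\frac{s}{1-u}-\psi(u),\qquad R^2G''=-\sum_{j>i}\frac{x_j(2-x_j)}{(1-x_j)^2}+\frac{2}{1-u}-\frac{s(2-u)}{(1-u)^2}.
\]
For $j>i$ you have $x_j\ge u$, and $x\mapsto(2-x)/(1-x)$ is increasing. Hence $\frac{x_j(2-x_j)}{(1-x_j)^2}\ge\frac{2-u}{1-u}\cdot\frac{x_j}{1-x_j}$. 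So at any point where $G'=0$,
\[
R^2G''\le\frac{2}{1-u}-\frac{2-u}{1-u}\Bigl(\sum_{j>i}\frac{x_j}{1-x_j}+\frac{s}{1-u}\Bigr)=\frac{2-(2-u)\psi(u)}{1-u}<0.
\]
The last inequality is $-\log(1-u)>2u/(2-u)$ on $(0,1)$: the difference vanishes at $u=0$ and has derivative $u^2/((1-u)(2-u)^2)>0$.

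This computation is what turns your fallback into a proof. Note that it uses the ordering $a_{i+1},\dots,a_n\ge a_i$ for every term of the sum, not just the first, and it needs the stationarity identity to absorb both the sum and the $s$-term at once.
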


\begin{proof}
Write
\begin{equation}
g_i(L):=\log F_i(L)
= \sum_{j=i+1}^n \log\left(1-\frac{a_j}{R}\right)
+ \frac{L-C_{i+1}}{a_i} \log\left(1-\frac{a_i}{R}\right).
\end{equation}
Elementary calculus shows that $g_i''(L)<0$ for every interior stationary point, so $g_i$, and hence $F_i$, has no interior minimum. Since $F_i$ is continuous on $[C_{i+1},C_i]$, its minimum is attained at one of the two endpoints.
\end{proof}

Now we can complete the interior case. For $L\leq a_n$, we've proved the theorem in \cref{eq.small_deficit_case}.
For $L>a_n$, choose $i$ as in \cref{claim.interpolation_bound}. Then \cref{eq.interpolation_bound} and \cref{claim.endpoint_interpolation} give
$$
\begin{aligned}
\mathbb P(T<1)
&\geq
F_i(L) \\[2ex]
&\geq \min\{F_i(C_{i+1}),F_i(C_i)\} \\
&=
\min\left\{
\prod_{j=i+1}^n \left( 1-\frac{a_j}{d+C_{i+1}} \right),
\prod_{j=i}^n \left( 1-\frac{a_j}{d+C_i} \right)
\right\}\\
&\geq
\min_{1\leq k\leq n} \prod_{j=k}^n \left( 1-\frac{a_j}{d+C_k} \right).
\end{aligned}
$$

\subsection{Boundary parameters}
Now we handle the general case $p_i\in [0,1], \ 1\leq i \leq n$.

Let
$
\epsilon_m\downarrow0
$
and define
\begin{equation}
p_i^{(m)}
:=
\begin{cases}
(1-\epsilon_m)p_i,&p_i \in (0,1],\\
\epsilon_m,&p_i=0.
\end{cases}
\label{eq.probability_approximation}
\end{equation}
We have $0<p_i^{(m)}<1$ and $w_ip_i^{(m)} \leq a_i$ for sufficiently large $m$.
Let $T_m := \sum_{i=1}^nw_iB_i^{(m)}$, where $B_i^{(m)} \sim \operatorname{Ber}(p_i^{(m)})$ independently. Set $d_m = 1 - \mathbb{E}T_m \to d >0$.
\begin{align}
\mathbb{P}(T_m<1)
&=
\sum_{b\in\{0,1\}^n}
\mathbf{1}_{\left\{ \sum_iw_ib_i<1 \right\}}
\prod_{i=1}^n
\left(p_i^{(m)}\right)^{b_i}
\left(1-p_i^{(m)}\right)^{1-b_i}
\notag\\
&\longrightarrow
\sum_{b\in\{0,1\}^n}
\mathbf{1}_{\left\{ \sum_iw_ib_i<1 \right\}}
\prod_{i=1}^n
p_i^{b_i}(1-p_i)^{1-b_i}
= \mathbb{P}(T<1).
\label{eq.probability_limit}
\end{align}
Applying the interior result to $T_m$ gives
\begin{equation}
\mathbb{P}(T_m<1)
\geq
\min_{1\leq i\leq n}
\prod_{j=i}^n \left( 1-\frac{a_j}{d_m+C_i} \right).
\end{equation}
Letting $m\to\infty$ yields
\begin{equation}
\mathbb{P}(T<1)
\geq
\min_{1\leq i\leq n}
\prod_{j=i}^n \left( 1-\frac{a_j}{d+C_i} \right).
\end{equation}
This completes the proof of \cref{thm.bernoulli}.

\clearpage
\bibliographystyle{alpha}
\bibliography{ref.bib}

\end{document}